\documentclass[12pt]{amsart}

\usepackage{mathptmx}
\usepackage{amssymb}
\usepackage{eucal}
\usepackage{amsxtra}
\usepackage{verbatim}
\usepackage{enumerate}
\usepackage{enumitem}
\usepackage[english]{babel}
\usepackage{bm}
\usepackage{newtxmath}
\usepackage{setspace}
\usepackage[x11names]{xcolor}

\usepackage[T1]{fontenc}

\usepackage{graphicx}
\usepackage{mathrsfs}
\usepackage{geometry}
\usepackage{amscd,latexsym,amsthm,amsfonts,amssymb,amsmath,amsxtra}
 \usepackage[colorlinks, linkcolor=DodgerBlue3, citecolor=Red2]{hyperref}
\usepackage{enumerate}
\usepackage[all]{xy}%
\providecommand{\U}[1]{\protect\rule{.1in}{.1in}}
\RequirePackage{amsmath}
\RequirePackage{amssymb}
\usepackage{comment}

\usepackage{mathrsfs}
\usepackage{tikz-cd}

\newtheorem{theorem}{Theorem}[subsection]
\newtheorem{proposition}[theorem]{Proposition}

\newtheorem{definition}[theorem]{Definition}
\newtheorem{remark}[theorem]{Remark}

\newcommand{\Proj}{\underline{\mathrm{Proj}}}
\newcommand{\Spec}{\underline{\mathrm{Spec}}}
\newcommand{\Pro}{\mathrm{Proj}}
\newcommand{\Spe}{\mathrm{Spec}}
\newcommand{\Osh}{\mathcal{O}}
\newcommand{\Ish}{\mathcal{I}}
\newcommand{\Csh}{\mathcal{C}}
\renewcommand{\Lsh}{\mathcal{L}}

\newcommand{\Prm}{\mathbb{P}}
\newcommand{\Arm}{\mathbb{A}}
\newcommand{\Zrm}{\mathbb{Z}}

\begin{document}
\title[]
{Blow-Up Constructions and Applications to Segre Classes and Multidegree formulas}

\author{Kai Huang}
\date{\today}

\begin{abstract}
By comparing simultaneous multigraded blow-ups with suitable iterated blow-up construction, we establish a birational correspondence between the associated exceptional divisors. We further investigate blow-ups arising from rational maps to multiprojective spaces and derive intersection-theoretic formulas for the Chern classes of the resulting exceptional divisors and pullback of tautological line bundles.

These constructions have two main applications. First, they yield a proof of the product formula for Segre classes without any pure-dimensionality hypothesis. Second, they lead to a degree formula for arbitrary closed subschemes of multiprojective spaces, recovering and extending the classical formula of van der Waerden.
\end{abstract}
\maketitle
\section{Introduction}
The main goal of this paper is to study two geometric constructions arising from blow-ups and to illustrate their applications in intersection theory.

The first part of the paper is devoted to the comparison of simultaneous multigraded blow-ups with suitable iterated blow-up constructions. We establish a birational correspondence between these models and describe the resulting relations among their exceptional divisors. This comparison provides a geometric mechanism for transferring intersection-theoretic information between the two constructions.

The second part studies blow-ups associated with rational maps to multiprojective spaces. Starting from the identification of a blow-up with the scheme-theoretic closure of the graph of a rational map, we analyze the relations among the pullback of tautological line bundles and exceptional divisors and derive explicit formulas for their Chern classes.

As a first application, we revisit the product formula for Segre classes. Existing proofs generally require the schemes involved to be pure-dimensional, since this hypothesis is needed to apply the standard cycle-theoretic relations in intersection theory (such as \cite[Lemma 4.2, Proposition 4.2 and Example 4.2.4]{Ful98}). Using the birational correspondence constructed above, we show that the product formula is fundamentally a consequence of the geometry of blow-ups. More precisely, an appropriate exceptional divisor serves as a geometric bridge that reduces the problem from arbitrary closed embeddings to the case of Cartier divisors on a suitable resolution. This yields a proof of the formula that avoids any pure-dimensionality assumptions.

A second application concerns degree formulas for closed subschemes of multiprojective spaces. 
This result may be viewed as a modern reformulation and extension of the classical degree formula of van der Waerden \cite{vdW78}. The original argument was formulated in the language of classical algebraic geometry and relied essentially on the assumption that the varieties under consideration were integral (as   explained in Remark \ref{conterex}). By contrast, our proof is entirely intersection-theoretic, based on the formalism of Chow groups, blow-ups, and refined intersection products developed by Fulton. Besides providing a conceptual and functorial derivation of the formula, this approach naturally extends the result from varieties to arbitrary closed subschemes.

We follow the notation and conventions of \cite{Ful98} throughout the paper.
\section{Comparison of Two Blow-up Constructions}
Fix a base field $k$. Throughout this section, all schemes and morphisms are over $k$.
\subsection{Geometric Framework}
 Let $X_i \subset Y_i$ be closed subschemes defined by coherent ideal sheaves $\mathcal{I}_i$, for each $i=1,\ldots,n$. Let $Bl_i$ be the blow-up of $Y_i \times \mathbb{A}^1_{k}$ along the closed imbedding $\iota_i:X_i\hookrightarrow Y_i \times \mathbb{A}^1_{k}$ induced by $X_i \hookrightarrow Y_i$ and $X_i \to \Spe\ k\overset{\text{0}}{\hookrightarrow}\mathbb{A}^1_{k}$. Let $E_i\subset Bl_i$ be the exceptional divisor.

We begin with the following Cartesian diagram
\[
\begin{tikzcd}
E_i \arrow[r, "\tau_i"] \arrow[d, hook, "\rho_i"] & X_i \arrow[d, hook, "\iota_i"] \\
Bl_i \arrow[r, "\pi_i"] & Y_i \times \mathbb{A}^1_k.
\end{tikzcd}
\]
This diagram induces a Cartesian diagram on the products
\[
\begin{tikzcd}
E_1 \times \dots \times E_n \arrow[r, "\tau_1 \times \dots \times \tau_n"] \arrow[d, hook, "\rho_1 \times \dots \times \rho_n"'] & X_1 \times \dots \times X_n \arrow[d, hook, "\iota_1 \times \dots \times \iota_n"] \\
Bl_1 \times \dots \times Bl_n \arrow[r, "\pi_1 \times \dots \times \pi_n"] & Y_1 \times \mathbb{A}^1_k \times \dots \times Y_n \times \mathbb{A}^1_k
\end{tikzcd}
\]

Let $Bl'$ be the blow-up of $E_1 \times \dots \times E_n \hookrightarrow Bl_1 \times \dots \times Bl_n$ with exceptional divisor $E'$, while $Bl$ be the blow-up of $X_1 \times \dots \times X_n \hookrightarrow Y_1 \times \mathbb{A}^1_k \times \dots \times Y_n \times \mathbb{A}^1_k$ with exceptional divisor $E$.
\begin{proposition}\label{iterate}
There exist morphisms $\phi$ and $\varphi$ making the following diagram commute with Cartesian squares, such that $\varphi^*\Osh_{Bl}(1)\cong \Osh_{Bl'}(1)$, $\phi_*[E'] = [E]\in Z_*(E)$ and $\varphi_*[Bl'] = [Bl]\in Z_*(Bl)$:
\[
\begin{tikzcd}[row sep=1.8cm, column sep=1.1cm]
E' \arrow[rr, "\tau'"] \arrow[d, hook, "\rho'"] \arrow[rrrdd, bend right=12, "\phi"', pos=0.5] & & E_1 \times \dots \times E_n \arrow[rr] \arrow[d, hook] & & X_1 \times \dots \times X_n \arrow[d, hook] \\
Bl' \arrow[rr, "\pi'"] \arrow[rrrdd, bend right=12, "\varphi"', pos=0.5] & & Bl_1 \times \dots \times Bl_n \arrow[rr] & & Y_1 \times \mathbb{A}^1_k \times \dots \times Y_n \times \mathbb{A}^1_k \\
& & & E \arrow[d, hook, "\rho"] \arrow[uur, "\tau", pos=0.4] & \\
& & & Bl \arrow[uur, "\pi"', pos=0.6] &
\end{tikzcd}
\]
\end{proposition}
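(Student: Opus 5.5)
The plan is to construct $\varphi$ from the universal property of blowing up, and then get $\phi$ and the equality of cycles by restricting to a dense open set where both maps are isomorphisms.

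\emph{Construction of $\varphi$.} Write $Y=\prod_i (Y_i\times\mathbb{A}^1_k)$, $X=\prod_i X_i$, $B=\prod_i Bl_i$ and $\Pi=\pi_1\times\dots\times\pi_n$. The ideal of $X$ in $Y$ is $\mathcal{J}=\sum_i p_i^{-1}\mathcal{J}_i$, where $\mathcal{J}_i$ is the ideal of $\iota_i$ and $p_i$ is the $i$-th projection. Since each $E_i=\pi_i^{-1}(X_i)$ is Cartier, the product square in the statement is Cartesian: $\Pi^{-1}(X)=E_1\times\dots\times E_n$ scheme-theoretically, with ideal $\mathcal{J}\cdot\Osh_B=\sum_i q_i^{-1}\Osh_{Bl_i}(-E_i)$.

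Let $\pi':Bl'\to B$ be the blow-up along this ideal. Then $(\Pi\circ\pi')^{-1}\mathcal{J}\cdot\Osh_{Bl'}=\Osh_{Bl'}(-E')$ is invertible. By the universal property of blow-ups there is a unique $\varphi:Bl'\to Bl$ with $\pi\circ\varphi=\Pi\circ\pi'$. The same property gives $\varphi^{-1}(E)=E'$ scheme-theoretically, since both equal the inverse image of $X$. Hence the lower square is Cartesian, $\phi=\varphi|_{E'}$ factors through $E$, and $\varphi^*\Osh_{Bl}(-E)\cong\Osh_{Bl'}(-E')$. As $\Osh_{Bl}(1)=\Osh_{Bl}(-E)$ on a blow-up, this gives $\varphi^*\Osh_{Bl}(1)\cong\Osh_{Bl'}(1)$. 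Commutativity of the remaining triangles, $\tau\circ\phi=(\text{product of }\tau_i)\circ\tau'$, follows by restricting $\pi\circ\varphi=\Pi\circ\pi'$ to $E'$.

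\emph{Pushforward of fundamental classes.} I would find a dense open $U\subset Bl$ with $\varphi^{-1}(U)\to U$ an isomorphism and $U\cap E$ dense in $E$ (as a scheme, i.e.\ meeting every irreducible component with correct multiplicities).

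Away from $E$ this is immediate: off $X$ and off $\prod E_i$, $\pi$, $\Pi$ and $\pi'$ are isomorphisms. However, $\Pi$ is \emph{not} birational onto $Y$ everywhere, since $B$ can contain components lying over $X_i\times\dots$. The cleaner route is a local computation. Locally $\mathcal{J}_i=(f_{i1},\dots,f_{im_i})$, and $Bl$ is covered by charts $D_+(f_{ij})$ with $f_{ij}$ a local equation of $E$. On the corresponding chart of $Bl_i$ the ideal of $E_i$ is $(f_{ij})$. On the chart of $Bl'$ where $f_{ij}$ generates $\sum_l q_l^{-1}\Osh(-E_l)$, each other ratio $f_{lk}/f_{ij}=(f_{lk}/f_{lk_l})(f_{lk_l}/f_{ij})$ is regular. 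So this chart of $Bl'$ maps to the chart $D_+(f_{ij})$ of $Bl$, and the natural comparison of coordinate rings should identify $\operatorname{Rees}$-type algebras.

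I expect this to be the main obstacle: showing $\varphi$ is an isomorphism over a dense open of $Bl$ meeting all components of $E$, or at least that $\varphi$ is proper birational on each component with multiplicities preserved. Blow-ups kill $f$-torsion, and I must compare torsion in the two constructions. My first attempt would be to show the two Rees algebras agree after inverting the product $\prod_l f_{lk_l}$, using that $\mathcal{I}_i$ contains the $\mathbb{A}^1$ coordinate $t_i$ (from $0\hookrightarrow\mathbb{A}^1$). Taking $f_{lk_l}=t_l$ gives the open set $\{t_l\ne0\text{ for some }l\}$. The charts $D_+(t_l)$ of $Bl_l$ are flat deformations (deformation to the normal cone), and the relevant open set of $Bl$ is dense in $E$ because $E$ is the projectivized normal cone and $t_i$ is a nonzerodivisor-type coordinate there.

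\emph{Conclusion.} Given this, $\varphi$ is an isomorphism over a dense open of $Bl$ and of $E$, and both $Bl'\to Bl$ and $E'\to E$ are proper. Comparing irreducible components and multiplicities on these opens, and showing that no component of $Bl'$ or $E'$ lies outside the open (again using $t$-torsion-freeness of blow-ups), gives $\varphi_*[Bl']=[Bl]$ and $\phi_*[E']=[E]$.
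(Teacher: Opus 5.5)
\textbf{Verdict: there is a genuine gap.} Your construction of $\varphi$ and $\phi$ is fine and agrees with the paper, as are the Cartesianness of both squares (via $\varphi^{-1}(E)=(\Pi\circ\pi')^{-1}(X)=E'$ in your notation) and $\varphi^*\Osh_{Bl}(1)\cong\Osh_{Bl'}(1)$. The gap is in the cycle identities, above all $\phi_*[E']=[E]$, which is exactly the step the paper calls subtle. You never produce an open set over which $\varphi$ is an isomorphism; you only say the Rees algebras ``should'' agree.

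\textbf{The open set you name is wrong.} You take $\{t_l\neq 0\ \text{for some}\ l\}=\bigcup_l D_+(t_l)$. Consider $n=2$, $Y_i=\Arm^1_k$, $X_i$ the origin. Then $E=\Prm^3_k$ with coordinates $[x_1:t_1:x_2:t_2]$, and $E'$ is a $\Prm^1$-bundle over $\Prm^1_k\times\Prm^1_k$. The map $\phi$ is the blow-up of $\Prm^3_k$ along the skew lines $\{x_1=t_1=0\}$ and $\{x_2=t_2=0\}$. Its fibre over a point $[a:b:0:0]$ is a $\Prm^1$, and $D_+(t_1)$ contains such points. So $\varphi$ is not an isomorphism over $D_+(t_1)$. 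The open that works is the intersection $U=D_+(t_1\cdots t_n)\subset Bl$.

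\textbf{Your remark about $B$ is also off.} $\Pi$ is \emph{not} an isomorphism over $Y\setminus X$, only over the complement of $\bigcup_i Z_i$, where $Z_i=\cdots\times X_i\times\cdots$. However, $B$ has no components over $Z_i$, because $\Pi^{-1}(Z_i)=Bl_1\times\cdots\times E_i\times\cdots\times Bl_n$ is an effective Cartier divisor. The paper gets $\varphi_*[Bl']=[Bl]$ exactly this way, by restricting to the complement of $\bigcup_i Z_i$.

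\textbf{What you need to close the gap.}
\begin{enumerate}
\item[(i)] Show that $\varphi^{-1}(U)\to U$ is an isomorphism. Locally, with $Y_i=\Spe A_i$ and $A=A_1\otimes_k\cdots\otimes_k A_n$, both coordinate rings sit inside $A[t_1^{\pm1},\dots,t_n^{\pm1}]$. Both are generated over $A[t_1,\dots,t_n]$ by the ratios $t_j/t_l$ and the $f/t_i$ with $f\in\Ish_i$, so they coincide.
\item[(ii)] Show that $U\cap E$ and $\varphi^{-1}(U)\cap E'$ contain all generic points of $E$ and $E'$.
\end{enumerate}

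Point (ii) is where the real input lies, and ``$t_i$ is a nonzerodivisor-type coordinate'' only gestures at it. One needs $E\cong\Proj_{X_1\times\cdots\times X_n}(\Csh_1[T_1]|\otimes\cdots\otimes\Csh_n[T_n]|)$. Over $k$ the normal cone of a product is the product of the normal cones, and each $\Arm^1$-factor contributes a free variable $T_i$. Hence $U\cap E=D_+(T_1\cdots T_n)$ is dense. One also needs that $E'$ is a $\Prm^{n-1}$-bundle over $E_1\times\cdots\times E_n$, trivial over $C_1\times\cdots\times C_n$.

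These are precisely the facts the paper uses. It combines them with the graded map $T_i\mapsto T_i$, $\alpha_i^{(d)}\mapsto\alpha_i^{(d)}T_i^d$, which is an isomorphism over $D_+(T_1\cdots T_n)$.

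Once (i) and (ii) are supplied, your single-open-set strategy gives $\varphi_*[Bl']=[Bl]$ and $\phi_*[E']=[E]$ at once. Since the lower square is Cartesian, $\phi$ is an isomorphism over $U\cap E$. That would be a tidy alternative to the paper's two separate arguments, but as written the decisive step is missing.
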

\begin{proof}
Since the inverse image ideal sheaf of $X_1 \times \dots \times X_n \hookrightarrow Y_1 \times \mathbb{A}^1_k \times \dots \times Y_n \times \mathbb{A}^1_k$ is the ideal sheaf of $E'\hookrightarrow Bl'$, which is isomorphic to the invertible sheaf $\Osh_{Bl'}(1)$, the universal property of the blow-up yields the morphism $\varphi$, while $\phi$ is induced by the universal property of the fiber product.

Note that if the blow-up center contains no generic points of the ambient space, the blow-up morphism induces a canonical identification between the generic points of the source and the target, with isomorphic local rings at each corresponding point. Therefore, the generic points of $Bl'$ correspond to those of $Bl_1 \times \dots \times Bl_n$ as $E_1 \times \dots \times E_n \hookrightarrow Bl_1 \times \dots \times Bl_n$ is a closed regular imbedding, which contains no generic points. Meanwhile, the generic points of $Bl$ correspond to those of $Y_1 \times \mathbb{A}^1_k \times \dots \times Y_n \times \mathbb{A}^1_k$.

For each $i = 1, \dots, n$, let $Z_i$ denote the closed subscheme of $Y_1 \times \mathbb{A}^1_k \times \dots \times Y_n \times \mathbb{A}^1_k$ obtained by replacing the $i$-th factor with $X_i$, namely,
\[ Z_i = Y_1 \times \mathbb{A}^1_k \times \dots \times X_i \times \dots \times Y_n \times \mathbb{A}^1_k. \]
Then, each $Z_i$ contains no generic points of the ambient space, and its scheme-theoretic inverse image, given explicitly by $Bl_1 \times \dots \times E_i \times \dots \times Bl_n$, contains no generic points of $Bl_1 \times \dots \times Bl_n$. Let $U$ be the open complement of the union $\bigcup_{i=1}^n Z_i$. By restricting the large diagram to $U$, the morphisms $\pi'$, $\pi$, and $\pi_1 \times \dots \times \pi_n$ all become isomorphisms, which implies that $\varphi$ is also an isomorphism over $U$. Since $U$ contains all generic points, from which it follows immediately that $\varphi_*[Bl'] = [Bl] \in Z_*(Bl)$.

The relationship between the exceptional divisors is more subtle. 
Note that the exceptional divisor is the projectivization of the normal cone. Let
\[
C_i=\Spec_{X_i}\Csh_i,
\qquad
\Csh_i=\bigoplus_{d\ge0}\Ish_i^d/\Ish_i^{d+1},
\]
be the normal cone of \(X_i\hookrightarrow Y_i\). Then
\[
E_i=\Proj_{X_i}\Csh_i[T_i],
\]
and
\[
E=
\Proj_{X_1\times\cdots\times X_n}
\bigl(
\Csh_1[T_1]|
\otimes\cdots\otimes
\Csh_n[T_n]|
\bigr),
\]
where \(\Csh_i[T_i]|\) denotes the pullback of \(\Csh_i[T_i]\) along \(X_1\times\cdots\times X_n\to X_i\). Since \(E_1\times\cdots\times E_n\hookrightarrow Bl_1\times\cdots\times Bl_n\) is a regular closed imbedding, \(E'\) is a \(\Prm^{n-1}\)-bundle over \(E_1\times\cdots\times E_n\). In particular, the projection
$E'\to E_1\times\cdots\times E_n$
induces a bijection on generic points. Since the open subscheme $
C_1\times\cdots\times C_n\subset E_1\times\cdots\times E_n$
contains all generic points of \(E_1\times\cdots\times E_n\), its inverse image contains all generic points of \(E'\). Moreover, as the \(\Prm^{n-1}\)-bundle is trivial over $
C_1\times\cdots\times C_n$,
its inverse image is identified with
\[
\Proj_{C_1\times\cdots\times C_n}
\bigl(
\Osh_{C_1\times\cdots\times C_n}[T_1,\ldots,T_n]
\bigr).
\]
Now we deduces a commutative diagram
\[
\begin{tikzcd}[column sep=0.7cm, row sep=large] 
\Proj_{C_1 \times \dots \times C_n} \left( \mathcal{O}_{C_1 \times \dots \times C_n} [T_1, \dots, T_n] \right) \arrow[r] \arrow[d] & 
E' \arrow[r, "\phi"] \arrow[d, "\tau'"] & 
E = \Proj_{X_1 \times \dots \times X_n} \left( \mathcal{C}_1[T_1]| \otimes \dots \otimes \mathcal{C}_n[T_n]| \right) \arrow[d, "\tau"] \\
C_1 \times \dots \times C_n \arrow[r] & 
E_1 \times \dots \times E_n \arrow[r, "\tau_1 \times \dots \times \tau_n"] & 
X_1 \times \dots \times X_n
\end{tikzcd}
\]
Consider the composite morphism
\[
\Proj_{C_1 \times \dots \times C_n}
\bigl(\mathcal{O}_{C_1 \times \dots \times C_n}[T_1,\dots,T_n]\bigr)
\longrightarrow
\Proj_{X_1 \times \dots \times X_n}
\bigl(\mathcal{C}_1[T_1]|\otimes \cdots \otimes \mathcal{C}_n[T_n]|\bigr).
\]
By construction of $\phi$, locally on affine charts (assuming all $X_i$ are affine), this morphism is given by
\[
\Pro\ (C_1\otimes_k\cdots\otimes_k C_n)[T_1,\dots,T_n]
\longrightarrow
\Pro\ \bigl(C_1[T_1]\otimes_k\cdots\otimes_k C_n[T_n]\bigr),
\]
and is induced by the homomorphism of graded rings
\[
C_1[T_1]\otimes_k\cdots\otimes_k C_n[T_n]
\longrightarrow
(C_1\otimes_k\cdots\otimes_k C_n)[T_1,\dots,T_n]
\]
sending \(T_i\) to \(T_i\) and homogeneous element
\(\alpha_i^{(d)}\in C_i\) of degree \(d\) to \(\alpha_i^{(d)}\cdot T_i^d\).
Since this morphism is an isomorphism over the dense open subscheme
\(D(T_1\cdots T_n)\), it induces a bijection between the generic points
of the source and the target, with isomorphic local rings at
corresponding points. Therefore, $
\phi_*[E']=[E]\in Z_*(E)$.
\end{proof}

\subsection{Application to Segre Classes}
Let $X\hookrightarrow Y$ be a closed imbedding. According to \cite[Section 4.1 and Section 4.2]{Ful98}, the Segre class of $X$ in $Y$ is the Segre class of the normal cone $C_X Y$ (corresponding to graded $\Osh_X$-algebra $\Csh_X Y$) of $X\hookrightarrow Y$:
\begin{equation}\label{2.9}
s(X,Y):=s(C_XY)
=q_*\left(
\sum_{i\ge 0}
c_1\bigl(\Osh(1)\bigr)^i
\cap
\bigl[\Proj_{X}(\Csh_X Y\oplus \mathbf{1})\bigr]
\right)\in A_*(X),
\end{equation}
where $q:\Proj_{X}(\Csh_X Y\oplus \mathbf{1})\to X$ is the projection morphism.

In case $X\hookrightarrow Y$ is a regular imbedding, $s(X,Y)$ is the cap product of the total inverse Chern class of the normal bundle with $[X]$.

\begin{remark}
Note that a well-known special case occurs when $X$ contains no generic
points of $Y$. In this situation,
\[
s(X,Y)
=
q'_*\left(
\sum_{i\ge 0}
c_1\bigl(\Osh(1)\bigr)^i
\cap
\bigl[\Proj_{X}(\Csh_XY)\bigr]
\right)
\in A_*(X).
\]
As an immediate consequence, $s(X,Y)=s(X,Y\times \Arm^1_k)$ for any closed imbedding $X\hookrightarrow Y$.

Indeed, let
\[
[C_XY]=\sum_i r_i[C_i]\in Z_*(C_XY)
\]
be the fundamental cycle of $C_XY$, and let $X_i$ denote the support of
$C_i$. Then
\[
[\Proj_X(\Csh_XY)]
=
\sum_i r_i[\Proj_{X_i}(C_i)]
\in Z_*\bigl(\Proj_X(\Csh_XY)\bigr).
\]
By \cite[Lemma~1.7.2]{Ful98},
\[
c_1\bigl(\Osh(1)\bigr)
\cap
\bigl[\Proj_X(\Csh_XY\oplus\mathbf{1})\bigr]
=
\bigl[\Proj_X(\Csh_XY)\bigr]\in Z_*\bigl(\Proj_X(\Csh_XY\oplus\mathbf{1})\bigr).
\]
Moreover, $
q_*\bigl[\Proj_X(\Csh_XY\oplus\mathbf{1})\bigr]=0$ 
for dimensional reasons. Therefore,
\[
q_*\left(
\sum_{i\ge0}
c_1\bigl(\Osh(1)\bigr)^i
\cap
\bigl[\Proj_X(\Csh_XY\oplus\mathbf{1})\bigr]
\right)
=
q'_*\left(
\sum_{i\ge0}
c_1\bigl(\Osh(1)\bigr)^i
\cap
\bigl[\Proj_X(\Csh_XY)\bigr]
\right),
\]
which yields the claimed formula.
\end{remark}
\begin{proposition}
Let $X_i\hookrightarrow Y_i$ be closed imbeddings for $i=1,...,n$. Then $s(X_1\times\cdots\times X_n,Y_1\times\cdots\times Y_n)=s(X_1,Y_1)\times\cdots\times s(X_n,Y_n)\in A_*(X_1\times\cdots\times X_n)$.
\end{proposition}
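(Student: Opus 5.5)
By the Remark, $s(X_i,Y_i)=s(X_i,Y_i\times\Arm^1_k)$. The same Remark applied to the whole product gives
\[
s(X_1\times\cdots\times X_n,Y_1\times\cdots\times Y_n)=s\bigl(X_1\times\cdots\times X_n,\;Y_1\times\Arm^1_k\times\cdots\times Y_n\times\Arm^1_k\bigr),
\]
since the extra $\Arm^{n}_k$ factor only makes the center contain no generic points. The extra factor can be added one $\Arm^1_k$ at a time, or the Remark's argument can be run verbatim with $\Arm^n_k$. So we may work in the setting of Proposition \ref{iterate}, where each $X_i$ contains no generic points of $Y_i\times\Arm^1_k$. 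By the Remark, the Segre classes are then computed from the exceptional divisors alone:
\[
s(X_i,Y_i\times\Arm^1_k)=\tau_{i*}\Bigl(\sum_{j\ge0}c_1(\Osh_{Bl_i}(1)|_{E_i})^j\cap[E_i]\Bigr),\qquad s(X,\,\prod_i Y_i\times\Arm^1_k)=\tau_*\Bigl(\sum_{j\ge0}c_1(\Osh(1))^j\cap[E]\Bigr),
\]
with $X=X_1\times\cdots\times X_n$. Here $E_i=\Proj(\Csh_i[T_i])$ and $\Osh(-1)$ restricts to the normal bundle of $E_i\subset Bl_i$.

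The main step transfers the computation for $E$ to $E'$ via Proposition \ref{iterate}. Since $\varphi^*\Osh_{Bl}(1)\cong\Osh_{Bl'}(1)$ and $\phi_*[E']=[E]$, the projection formula gives
\[
\tau_*\Bigl(\sum_j c_1(\Osh(1))^j\cap[E]\Bigr)=(\tau_1\times\cdots\times\tau_n)_*\,\tau'_*\Bigl(\sum_j c_1(\Osh_{Bl'}(1)|_{E'})^j\cap[E']\Bigr).
\]
The inner pushforward is, by definition and the Remark, the Segre class $s(E_1\times\cdots\times E_n,\,Bl_1\times\cdots\times Bl_n)$. Here $E_1\times\cdots\times E_n$ contains no generic points, so the divisor-only form of the Segre class applies.

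Now $E_1\times\cdots\times E_n\hookrightarrow Bl_1\times\cdots\times Bl_n$ is a regular imbedding of codimension $n$. Its normal bundle is $N=p_1^*N_1\oplus\cdots\oplus p_n^*N_n$, where $N_i=\Osh_{E_i}(E_i)$ is the normal line bundle of the Cartier divisor $E_i$. Hence
\[
s(E_1\times\cdots\times E_n,\,Bl_1\times\cdots\times Bl_n)=c(N)^{-1}\cap[E_1\times\cdots\times E_n]=\bigl(c(N_1)^{-1}\cap[E_1]\bigr)\times\cdots\times\bigl(c(N_n)^{-1}\cap[E_n]\bigr).
\]
This uses $[E_1\times\cdots\times E_n]=[E_1]\times\cdots\times[E_n]$ and the compatibility of Chern classes of pulled-back bundles with exterior products, both of which need no purity hypothesis. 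Each factor $c(N_i)^{-1}\cap[E_i]=\sum_j c_1(\Osh_{Bl_i}(1)|_{E_i})^j\cap[E_i]$. Since proper pushforward commutes with exterior products, applying $(\tau_1\times\cdots\times\tau_n)_*$ gives $\prod_i s(X_i,Y_i\times\Arm^1_k)=\prod_i s(X_i,Y_i)$, which proves the statement.

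The step I expect to be delicate is the identification of the pushforward from $E'$ with the regular-imbedding Segre class, and in particular the sign conventions for $\Osh(1)$. One must check that $\Osh_{Bl'}(-1)|_{E'}$ is the tautological subbundle of $\Prm(N)$. The argument then needs $\tau'_*\sum_j c_1(\Osh(1))^j\cap[\Prm(N)]=c(N)^{-1}\cap[\,\cdot\,]$, which is Fulton's definition of Segre classes of vector bundles (Section 3.1). That identity holds for arbitrary schemes, so no pure-dimensionality enters. The second point needing care is the first reduction, which replaces $Y_i$ by $Y_i\times\Arm^1_k$ simultaneously on both sides.
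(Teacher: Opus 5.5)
Your proposal is correct and follows the paper's proof essentially step for step: you pass to $Y_i\times\mathbb{A}^1_k$ via the Remark, then use $\phi_*[E']=[E]$ and $\varphi^*\mathcal{O}_{Bl}(1)\cong\mathcal{O}_{Bl'}(1)$ with the projection formula to move from $E$ to $E'$, identify the result with the Segre class of the regular imbedding $E_1\times\cdots\times E_n\hookrightarrow Bl_1\times\cdots\times Bl_n$ (whose normal bundle splits), and push forward along $\tau_1\times\cdots\times\tau_n$. The only difference is organizational: the paper first proves the regular-imbedding case for arbitrary products as a preliminary step, while you do that computation directly for the $E_i$ and make explicit the identification $c(N_i)^{-1}\cap[E_i]=\sum_j c_1(\mathcal{O}(1))^j\cap[E_i]$, which the paper's final equality uses without stating it.
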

\begin{proof}
Note that if each $X_i\hookrightarrow Y_i$ is a regular imbedding, then so is
\[
X_1\times\cdots\times X_n
\hookrightarrow
Y_1\times\cdots\times Y_n .
\]
Furthermore, the normal bundle decomposes as
\[
N_{X_1\times\cdots\times X_n}
Y_1\times\cdots\times Y_n
\cong
\bigoplus_{i=1}^{n}
pr_i^{*}N_{X_i}Y_i .
\]
It follows that
\begin{align*}
s(X_1 \times \cdots \times X_n, Y_1 \times \cdots \times Y_n) &= \frac{1}{c(N_{X_1 \times \cdots \times X_n} Y_1 \times \cdots \times Y_n)} \cap [X_1 \times \cdots \times X_n] \\
&= \frac{1}{c(N_{X_1 \times \cdots \times X_n} Y_1 \times \cdots \times Y_n)}\cap ([X_1] \times \cdots \times [X_n] )\\
&= \left( \frac{1}{c(N_{X_1}Y_1)} \cap [X_1] \right) \times \cdots \times \left( \frac{1}{c(N_{X_n}Y_n)} \cap [X_n] \right) \\
&= s(X_1, Y_1) \times \cdots \times s(X_n, Y_n)
\end{align*}

For the general case, we use the construction given in Proposition~\ref{iterate}.
\begin{align*}
& s(X_1 \times \cdots \times X_n, Y_1 \times \cdots \times Y_n) = s(X_1 \times \cdots \times X_n, Y_1 \times \cdots \times Y_n \times \Arm_k^n) \\
& = \tau_* \left( \sum_{i \ge 0} c_1(\mathcal{O}(1))^i \cap [E] \right) = \tau_* \phi_* \left( \sum_{i \ge 0} c_1(\mathcal{O}(1))^i \cap [E'] \right) \quad \text{(Projection formula)}\\
& = (\tau_1 \times \cdots \times \tau_n)_* \circ \tau'_* \left( \sum_{i \ge 0} c_1(\mathcal{O}(1))^i \cap [E'] \right) = (\tau_1 \times \cdots \times \tau_n)_* s(E_1 \times \cdots \times E_n, Bl_1 \times \cdots \times Bl_n) \\
& = (\tau_1 \times \cdots \times \tau_n)_* \left( s(E_1, Bl_1) \times \cdots \times s(E_n, Bl_n) \right) = s(X_1, Y_1) \times \cdots \times s(X_n, Y_n)
\end{align*}

\end{proof}

\section{Isomorphism Between Graph Closure and Blow-up}
\subsection{Geometric Framework}
Let $X$ be a scheme, $\Lsh$ an invertible sheaf on $X$. Suppose we are given a finite collection of global sections $s_0, s_1, \dots, s_m \in H^0(X, \Lsh)$, not all of which vanish identically. These sections define a closed subscheme $V(s_0, \dots, s_m)$, whose ideal sheaf $\Ish \subset \Osh_X$ is the image of the evaluation map $\Osh_X^{m+1} \xrightarrow{(s_0, \dots, s_m)} \Lsh$, twisted by $\Lsh^{-1}$. 
Each $s_i$ lifts naturally to a section $s_i'\in H^0(X,\Ish\otimes\Lsh)$ via the canonical morphism $\Ish\otimes\Lsh \to \Lsh$.

On the open subscheme $U = X \setminus V(s_0, \dots, s_m)$, these sections define a morphism
\[ \phi: U \longrightarrow \Prm^m_{\Zrm} \]
by \cite[Theorem 7.1]{Har77}.

\begin{definition}
We define two schemes over $X$:
\begin{enumerate}
    \item \textbf{The Blow-up:} The blow-up of $X$ along the ideal sheaf $\Ish$ is defined as the relative Proj of the Rees algebra:
    \[ \tilde{X} := \Proj_X \left( \bigoplus_{d \ge 0} \Ish^d \right) ,\]
    \item \textbf{The Graph Closure:}  The graph closure $\Gamma$ of $\phi$ is defined as the scheme-theoretic image of the canonical morphism $U\to X \times \Prm^m_\Zrm$, according to \cite[Tag 01R7]{SP}.
\end{enumerate}
\end{definition}
Consider the relative projective space $X \times \Prm^m_\Zrm = \Proj_X(\Osh_X[y_0, \dots, y_m])$, where $y_0, \dots, y_m$ are the standard coordinates of $\Prm^m_\Zrm$. We can define a surjective homomorphism of graded $\Osh_X$-algebras
\[ \Osh_X[y_0, \dots, y_m] \twoheadrightarrow \bigoplus_{d \ge 0} (\Ish^d \otimes \Lsh^d )\]
which, in degree $1$, maps the free variable $y_i$ to the global section $s_i'\in H^0(X,\Ish\otimes\Lsh)$. 
By the functoriality of the $\Proj$ construction, it induces a closed imbedding
\[ \iota': \Proj_X \left( \bigoplus_{d \ge 0} (\Ish^d \otimes \Lsh^d )\right) \hookrightarrow \Proj_X(\Osh_X[y_0, \dots, y_m]) = X \times \Prm^m_\Zrm .\]

According to \cite[Lemma 7.9]{Har77}, the isomorphism of $X$-schemes
\[ \Proj_X \left( \bigoplus_{d \ge 0} (\Ish^d \otimes \Lsh^d )\right) \cong \Proj_X \left( \bigoplus_{d \ge 0} \Ish^d \right) = \tilde{X} \] induces a closed imbedding $\iota: \tilde{X} \hookrightarrow X \times \Prm^m_\Zrm$
such that the composite $\pi:\tilde{X}\xrightarrow{\iota}X \times \Prm^m_\Zrm \xrightarrow{pr_1}X$ is the blow-up morphism, while the composite $\Phi:\tilde{X}\xrightarrow{\iota}X \times \Prm^m_\Zrm \xrightarrow{pr_2}\Prm^m_\Zrm$ is compatible with $\phi$. Furthermore, we have $\Phi^*\Osh_{\Prm^m_\Zrm}(1)\cong \Osh_{\tilde{X}}(1)\otimes \pi^* \Lsh$.

Moreover, the factorization $U\xrightarrow{j}\tilde{X}\xrightarrow{\iota} X \times \Prm^m_\Zrm$ naturally induces a closed imbedding $\Gamma \hookrightarrow \tilde{X}$, which is an isomorphism if $X$ is integral: In the commutative diagram below, the right vertical morphism is injective. Therefore, the left vertical morphism is an isomorphism.
\[
\begin{tikzcd}
0 \arrow[r]
& \ker
  \arrow[r]
  \arrow[d, hook]
& \mathcal O_{X\times \Prm^m_\Zrm}
  \arrow[r]
  \arrow[d, equal]
& \iota_*\mathcal O_{\widetilde X}
  \arrow[r]
  \arrow[d]
& 0 \\
0 \arrow[r]
& \ker
  \arrow[r]
& \mathcal O_{X\times \Prm^m_\Zrm}
  \arrow[r]
& \iota_* j_*\mathcal O_U
\end{tikzcd}
\]

The arguments above yield the following more general statement.
\begin{proposition}\label{graph}
Let $X$ be an integral scheme, and let $\Lsh_1,\ldots,\Lsh_n$ be invertible sheaves on $X$. For each $i=1,\ldots,n$, let
\[
s_0^{(i)},s_1^{(i)},\ldots,s_{m_i}^{(i)}
\in H^0(X,\Lsh_i)
\]
be global sections that are not all identically zero, and let
$\Ish_i\subset\Osh_X$,
$\pi_i:\widetilde X_i\to X$,
$\iota_i:\widetilde X_i\hookrightarrow X\times\Prm^{m_i}_{\Zrm}$,
and
$\phi_i:U_i\to\Prm^{m_i}_{\Zrm}$
denote the associated ideal sheaf, blow-up, closed imbedding, and morphism constructed above.

Let $\pi:\widetilde X\to X$ be the blow-up of $X$ along the ideal sheaf $\Ish_1\cdots\Ish_n$, and let $\Gamma$ be the graph closure of
\[
\phi_1\times\cdots\times\phi_n:
U_1\cap\cdots\cap U_n
\longrightarrow
\Prm^{m_1}_{\Zrm}\times\cdots\times\Prm^{m_n}_{\Zrm}.
\]
Then there exists a closed imbedding
\[
\iota:\widetilde X
\hookrightarrow
X\times
\Prm^{m_1}_{\Zrm}\times\cdots\times\Prm^{m_n}_{\Zrm}
\]
which identifies $\Gamma$ with $\widetilde X$. Moreover, for each $i=1,\ldots,n$, there exists a morphism
$\eta_i:\widetilde X\to\widetilde X_i$
such that
\[
(pr_i\circ\iota)^*
\Osh_{\Prm^{m_i}_{\Zrm}}(1)
\cong
\eta_i^*\Osh_{\widetilde X_i}(1)\otimes\pi^*\Lsh_i, \]
\[\Osh_{\tilde{X}}(1)\cong \eta_1^*\Osh_{\widetilde X_1}(1)\otimes...\otimes\eta_n^*\Osh_{\widetilde X_n}(1).
\]
\end{proposition}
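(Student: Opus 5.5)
We reduce to the single-line-bundle construction above, applied to the product ideal, and then compare with the individual blow-ups. Put $\Lsh=\Lsh_1\otimes\cdots\otimes\Lsh_n$. Consider the graded $\Osh_X$-algebra
\[
\mathcal{S}=\bigoplus_{d_1,\ldots,d_n\ge 0}\Ish_1^{d_1}\cdots\Ish_n^{d_n}\otimes\Lsh_1^{d_1}\otimes\cdots\otimes\Lsh_n^{d_n}.
\]
It receives a surjection from $\Osh_X[y^{(1)}_0,\ldots,y^{(n)}_{m_n}]$ sending $y^{(i)}_j\mapsto s_j^{(i)\prime}$; surjectivity holds because $\Ish_i\otimes\Lsh_i$ is generated by the $s_j^{(i)\prime}$. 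Its diagonal part $\bigoplus_d \mathcal{S}_{(d,\ldots,d)}$ is $\bigoplus_d(\Ish_1\cdots\Ish_n)^d\otimes\Lsh^d$. By \cite[Lemma 7.9]{Har77}, the relative Proj of this diagonal algebra is $\widetilde X$.

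The diagonal algebra is a quotient of the Segre algebra $\bigoplus_d\bigotimes_i\Osh_X[y^{(i)}]_d$, whose relative Proj is $X\times\Prm^{m_1}_\Zrm\times\cdots\times\Prm^{m_n}_\Zrm$ via the Segre embedding. Hence we get a closed imbedding
\[
\iota:\widetilde X\hookrightarrow X\times\prod_i\Prm^{m_i}_\Zrm.
\]
Its first projection is $\pi$. Concretely, $\iota$ is the multi-Proj of the quotient $\mathcal{S}$, and one should check that multi-Proj of a quotient of a polynomial multigraded algebra is a closed subscheme of the product, identified with Proj of the diagonal. By construction, $pr_i\circ\iota$ agrees with $\phi_i$ on $U=U_1\cap\cdots\cap U_n$. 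The open $U$ is exactly the complement of $V(\Ish_1\cdots\Ish_n)$, over which $\pi$ is an isomorphism.

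To identify $\Gamma$ with $\widetilde X$, repeat the kernel diagram argument given above. The factorization $U\xrightarrow{j}\widetilde X\xrightarrow{\iota}X\times\prod\Prm^{m_i}$ shows that $\Gamma\hookrightarrow\widetilde X$ is a closed imbedding. It then suffices that $\Osh_{\widetilde X}\to j_*\Osh_U$ is injective. This holds because $\widetilde X$ is integral, being the blow-up of an integral scheme along a nonzero ideal (the product of nonzero ideals is nonzero on an integral scheme), and $U$ is a nonempty, hence dense, open subset. So $\ker$ coincides in both rows and $\Gamma=\widetilde X$.

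For the morphisms $\eta_i$, note that $\pi^{-1}\Ish_i\cdot\Osh_{\widetilde X}$ is invertible. This follows because $\pi^{-1}(\Ish_1\cdots\Ish_n)\Osh_{\widetilde X}$ is invertible and locally equals the product of the $\pi^{-1}\Ish_i\Osh_{\widetilde X}$: on an integral scheme, if a product of ideals is invertible, then each factor is invertible, by a local argument with nonzerodivisors (the factors are locally principal, using that a finitely generated ideal $I$ with $IJ=(f)$ gives $I\cdot(J f^{-1})=\Osh$ in the function field). The universal property of blow-up then gives $\eta_i:\widetilde X\to\widetilde X_i$ over $X$ with $\eta_i^*\Osh_{\widetilde X_i}(1)\cong\pi^{-1}\Ish_i\cdot\Osh_{\widetilde X}$. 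Moreover, $\eta_i$ is compatible with the embeddings, since $\iota_i\circ\eta_i$ and $(pr_1,pr_i)\circ\iota$ agree on the dense open $U$ and the target is separated over $X$ with $\widetilde X$ reduced. This gives
\[
(pr_i\circ\iota)^*\Osh(1)\cong\eta_i^*\Phi_i^*\Osh(1)\cong\eta_i^*\Osh_{\widetilde X_i}(1)\otimes\pi^*\Lsh_i
\]
by the single-bundle case. Finally,
\[
\Osh_{\widetilde X}(1)=\pi^{-1}(\Ish_1\cdots\Ish_n)\Osh_{\widetilde X}=\prod_i\pi^{-1}\Ish_i\Osh_{\widetilde X}\cong\bigotimes_i\eta_i^*\Osh_{\widetilde X_i}(1),
\]
where the product of invertible ideals equals their tensor product.

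The main obstacle is the invertibility of each $\pi^{-1}\Ish_i\Osh_{\widetilde X}$ from that of the product, together with keeping the Segre/multi-Proj identification honest. I would handle the invertibility step locally on affine charts $D_+(f_1\cdots f_n)$ with $f_i$ among the generators of $\Ish_i$. On such a chart each $\Ish_i$ becomes generated by $f_i$, which directly shows invertibility and avoids the abstract argument.
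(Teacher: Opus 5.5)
Your proof is correct, but it organizes the construction differently from the paper.

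\textbf{The paper's route.} The paper never proves that $\Ish_i\Osh_{\widetilde X}$ is invertible. It writes $\widetilde X_1\times_X\cdots\times_X\widetilde X_n=\Proj_X\bigl(\bigoplus_d \Ish_1^d\otimes\cdots\otimes\Ish_n^d\bigr)$ and embeds $\widetilde X$ in it via the surjection onto the Rees algebra of $\Ish_1\cdots\Ish_n$. It then simply \emph{defines} $\eta_i$ as the $i$-th projection, and $\iota$ as the composite with $\iota_1\times\cdots\times\iota_n$. With these definitions:
\begin{enumerate}
\item the compatibility $(pr_1,pr_i)\circ\iota=\iota_i\circ\eta_i$ holds by construction;
\item $\Osh_{\widetilde X}(1)\cong\bigotimes_i\eta_i^*\Osh_{\widetilde X_i}(1)$ is just the restriction of the Segre-product identity $\Osh(1)\cong\bigotimes_i pr_i^*\Osh(1)$;
\item integrality of $X$ is used only for $\Gamma\cong\widetilde X$.
\end{enumerate}

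\textbf{Your route.} You embed directly through the Segre algebra of the polynomial rings. This gives the same closed imbedding as the paper's composite, since your surjection factors through the tensor Rees algebra. You obtain $\eta_i$ from the universal property of blowing up, after your chart argument on $D_+(f_1\cdots f_n)$; that argument is valid because $f_1\cdots f_n$ is a nonzerodivisor there, so each factor can be cancelled. You then need two further facts:
\begin{enumerate}
\item the reduced-to-separated lemma, and hence reducedness of $\widetilde X$, to match $\eta_i$ with the embeddings;
\item the fact that a product of invertible ideals equals their tensor product, for the last isomorphism.
\end{enumerate}

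\textbf{Comparison.} Your argument is longer and uses integrality in a step where the paper's construction does not need it. In exchange, it characterizes $\eta_i$ as the unique $X$-morphism $\widetilde X\to\widetilde X_i$. It also yields explicitly $\eta_i^*\Osh_{\widetilde X_i}(1)\cong\Ish_i\Osh_{\widetilde X}$ as an invertible ideal sheaf. The paper later uses exactly this fact without comment, when it asserts that $\eta_i^*\Osh_{\widetilde X_i}(-1)$ corresponds to the effective Cartier divisor $\widetilde X\times_X V(\Ish_i)$ in the multidegree application.

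Your identification of $\Gamma$ is essentially the paper's argument. You use the kernel diagram together with integrality of $\widetilde X$ and density of $U$; the paper says the source is reduced, so $\Gamma$ is the reduced closure of $U$, which is all of $\widetilde X$. Your aside about multi-Proj of $\mathcal{S}$ is not needed once you use the Segre algebra.
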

\begin{proof}
Observe that
\[
\widetilde X_1\times_X\cdots\times_X\widetilde X_n
\cong
\Proj_X\!\left(
\bigoplus_{d\ge0}
(\Ish_1^d\otimes\cdots\otimes\Ish_n^d)
\right).
\]
The natural surjective homomorphism of graded \(\Osh_X\)-algebras
\[
\bigoplus_{d\ge0}
(\Ish_1^d\otimes\cdots\otimes\Ish_n^d)
\twoheadrightarrow
\bigoplus_{d\ge0}
(\Ish_1\cdots\Ish_n)^d
\]
induces a closed imbedding
\[
\widetilde X
\hookrightarrow
\widetilde X_1\times_X\cdots\times_X\widetilde X_n.
\]
For each \(i\), the morphism \(\eta_i\) is obtained by composing it with the projection onto the \(i\)-th factor, while \(\iota\) is obtained by further composing with
\[
\iota_1\times\cdots\times\iota_n:
\widetilde X_1\times_X\cdots\times_X\widetilde X_n
\hookrightarrow
X\times
\Prm^{m_1}_{\Zrm}\times\cdots\times\Prm^{m_n}_{\Zrm}.
\]

Since \(U_1\cap\cdots\cap U_n\) is reduced, \(\Gamma\) is precisely the reduced induced closed subscheme structure on the closure of its image in
$
X\times
\Prm^{m_1}_{\Zrm}\times\cdots\times\Prm^{m_n}_{\Zrm}$.
As this closure is equal to \(\widetilde X\), it follows that
$\Gamma\cong\widetilde X$.
The commutative diagram below induces the claimed isomorphism of invertible sheaves
\[\begin{tikzcd}[column sep=large,row sep=large]
&& U_i \arrow[dr] & \\
U_1\cap...\cap U_n
\arrow[rru]
\arrow[r]
&
\widetilde X
\arrow[r, hook]
&
\widetilde X_1\times_X\cdots\times_X\widetilde X_n
\arrow[r,"pr_i"]
\arrow[d,hook,"\iota_1\times...\times\iota_n"]
&
\widetilde X_i
\arrow[d,hook,"\iota_i"]
\\
&&
\Prm^{m_1}_X\times_X\cdots\times_X\Prm^{m_n}_X
\arrow[r, "pr_i"]
&
X\times
\Prm^{m_i}_{\Zrm}
\arrow[r]
&
\Prm^{m_i}_{\Zrm}. 
\end{tikzcd}\]
\end{proof}

\subsection{Application to Multidegree Formula}
Let $k$ be a field. Applying Proposition~\ref{graph} to
\[
X=\Prm^{m+n+1}_k=\Pro \ k[x_0,\ldots,x_m,y_0,\ldots,y_n],
\qquad
\Lsh_1=\Lsh_2=\Osh_X(1),
\]
with sections
\[
x_0,\ldots,x_m\in H^0(X,\Lsh_1),
\qquad
y_0,\ldots,y_n\in H^0(X,\Lsh_2),
\]
we obtain ideal sheaves $\Ish_i\subset\Osh_X$, blow-up
$\pi:\widetilde X\to X$, open imbedding $j:U_1\cap U_2\to \tilde{X}$, closed imbedding
$
\iota:\widetilde X
\hookrightarrow
X\times\Prm^{m}_{k}\times\Prm^{n}_{k}$,
and morphisms $
\eta_i:\widetilde X\to\widetilde X_i$. We denote
\[
\begin{aligned}
pr &: X\times \mathbb P_k^m\times \mathbb P_k^n
   \to \mathbb P_k^m\times \mathbb P_k^n,\\
pr_1 &: X\times \mathbb P_k^m\times \mathbb P_k^n
     \to \mathbb P_k^m,\\
pr_2 &: X\times \mathbb P_k^m\times \mathbb P_k^n
     \to \mathbb P_k^n,\\
\xi& :=c_1(\pi^*\Osh_X(1)),\\
t_1& :=c_1(\iota^*pr_1^*\Osh_{\Prm^m_k}(1)),\\
t_2& :=c_1(\iota^*pr_2^*\Osh_{\Prm^n_k}(1)).
\end{aligned}
\]

Since $\widetilde X$ is identified with the graph closure, every fiber of
\[
pr\circ\iota:\widetilde X\to \mathbb P_k^m\times\mathbb P_k^n
\]
is isomorphic to $\mathbb P^1$. By Miracle Flatness, it follows that $pr\circ\iota$ is a smooth proper morphism of relative dimension~$1$.
Moreover, since $\eta_i^*\Osh_{\widetilde X_i}(-1)$ corresponds to the effective Cartier divisor $\widetilde X\times_X V(\Ish_i)$ on $\widetilde X$ and $V(\Ish_1)\cap V(\Ish_2)=\varnothing$, it follows that
\[
c_1\bigl(\eta_1^*\Osh_{\widetilde X_1}(-1)\bigr)\cdot
c_1\bigl(\eta_2^*\Osh_{\widetilde X_2}(-1)\bigr)=0,
\]
which is equivalent to
\[
\xi^2-(t_1+t_2)\xi+t_1t_2=0.
\]
Hence,
\[
\xi^\ell = h_{\ell-1}(t_1,t_2)\,\xi - t_1 t_2\, h_{\ell-2}(t_1,t_2)
\qquad (\ell \ge 1),
\]
where \(h_l(T_1,T_2)\in \mathbb Z[T_1,T_2]\) is defined by
\[
h_l(T_1,T_2)=
\begin{cases}
T_1^l + T_1^{l-1}T_2 + \cdots + T_2^l, & l \ge 0,\\
0, & l=-1.
\end{cases}
\]

Now let $V\subset \Prm^m_k\times\Prm^n_k$ be a closed subscheme. let $V'':=(pr\circ\iota)^{-1}(V)\subset \tilde{X}$
be its scheme-theoretic inverse image, and let $V'\subset X$
be the scheme-theoretic image of
$(pr\circ\iota\circ j)^{-1}(V)\to U_1\cap U_2 \to X$. It follows from \cite[Lemma 1.7.1]{Ful98} that
\[
\pi_*[V'']= [V'] \in Z_*(X).
\]
Furthermore, if
\[
[V]=\sum_i r_i[V_i]\in Z_*(\Prm^m_k\times\Prm^n_k)
\]
is the fundamental cycle of $V$, then
\[
[V'']=\sum_i r_i[V_i'']\in Z_*(\widetilde X),
\qquad
[V']=\sum_i r_i[V_i']\in Z_*(X).
\]
Consequently,
\[
\pi_*[V'']_\ell = [V']_\ell
\]
for every \(0\le \ell\le m+n+1\), where the subscript \(\ell\) denotes the \(\ell\)-dimensional component of a cycle.
\begin{remark}\label{conterex}
Note that if $V$ is an integral closed subscheme of dimension $d$, then $V'$ is integral of dimension $d+1$. Furthermore, the bihomogeneous equations defining $V\subset \Prm^m_k\times\Prm^n_k$ also define $V'$ as a closed subscheme of $\Prm^{m+n+1}_k$. Hence our construction recovers the situation treated by van der Waerden in \cite{vdW78}.

For non-integral subschemes, however, the relation between degrees and bidegrees is more subtle. For instance, let $V\subset \Prm^1_k\times\Prm^1_k$ be defined by
$x_0y_0=x_1y_1=0$.
Then the sum of the bidegrees of $V$ equals $2$, while the closed subscheme of $\Prm^3_k$ defined by the same equations has degree $4$.
\end{remark}

\begin{definition}
For integers $0\le a\le m+n+1$ and $0\le b\le m+n$, and for closed subschemes
$
Y\subset \Prm^{m+n+1}_k,
Z\subset \Prm^m_k\times\Prm^n_k,
$
we introduce the following notions of degree.
\begin{enumerate}
    \item \textbf{The codimension-$a$ degree of $Y$:}
    Under the identification
    \[
    A^*(\Prm^{m+n+1}_k)\cong \Zrm[T]/(T^{m+n+2}),
    \]
    the cycle class $[Y]$ is represented by a polynomial $f(T)$ of degree at most $m+n+1$. The codimension-$a$ degree of $Y$ is defined to be the coefficient of $T^a$ in $f(T)$.

    \item \textbf{The codimension-$b$ bidegrees of $Z$:}
    Under the identification
    \[
    A^*(\Prm^m_k\times\Prm^n_k)
    \cong
    \Zrm[T_1,T_2]/(T_1^{m+1},T_2^{n+1}),
    \]
    the cycle class $[Z]$ is represented by a polynomial $g(T_1,T_2)$ of total degree at most $m+n$. For each pair $(\alpha,\beta)$ satisfying $\alpha+\beta=b$, the coefficient of $T_1^\alpha T_2^\beta$ in $g(T_1,T_2)$ is called a codimension-$b$ bidegree of $Z$.
\end{enumerate}
\end{definition}
\begin{proposition}
Let $V\subset \Prm^m_k\times\Prm^n_k$ be a closed subscheme, and let $V''$ and $V'$ be the closed subschemes associated to $V$ by the above construction. Then, for every $0\le d\le m+n$, the codimension-$d$ degree of $V'$ is equal to the sum of the codimension-$d$ bidegrees of $V$.
\end{proposition}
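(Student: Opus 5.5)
We compute degrees by intersecting with powers of $\xi$ and pushing forward to a point. Let $\ell$ be the dimension and $d=m+n-\ell$ the codimension in $\Prm^m_k\times\Prm^n_k$. Then $V'$ has components of dimension $\ell+1$, with codimension $d$ in $\Prm^{m+n+1}_k$. The codimension-$d$ degree of $V'$ is $\int_X \xi^{\ell+1}\cap [V']_{\ell+1}$.

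Since $\pi^*\Osh_X(1)$ has first Chern class $\xi$, the projection formula together with $\pi_*[V'']_{\ell+1}=[V']_{\ell+1}$ gives
\[
\deg\bigl(\xi^{\ell+1}\cap [V']_{\ell+1}\bigr)=\deg\bigl(\xi^{\ell+1}\cap[V'']_{\ell+1}\bigr).
\]
Here $[V'']_{\ell+1}=\sum_{\dim V_i=\ell} r_i[V_i'']$. The map $p:=pr\circ\iota:\widetilde X\to\Prm^m_k\times\Prm^n_k$ is smooth (hence flat) of relative dimension $1$, so $[V_i'']=p^*[V_i]$ and $[V'']_{\ell+1}=p^*[V]_\ell$.

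Next we rewrite $\xi^{\ell+1}$ using the quadratic relation established above:
\[
\xi^{\ell+1}=h_\ell(t_1,t_2)\,\xi-t_1t_2\,h_{\ell-1}(t_1,t_2),
\]
where $t_1,t_2$ are pulled back from $\Prm^m_k\times\Prm^n_k$. By the projection formula for the flat proper map $p$, we get
\[
p_*\bigl(\xi^{\ell+1}\cap p^*\alpha\bigr)=h_\ell(T_1,T_2)\cap p_*(\xi\cap p^*\alpha)-T_1T_2h_{\ell-1}(T_1,T_2)\cap p_*p^*\alpha .
\]
The term $p_*p^*\alpha$ vanishes for dimension reasons, since the fibers are one-dimensional.

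The key remaining input is $p_*(\xi\cap p^*\alpha)=\alpha$. It suffices to show that $\xi$ has degree $1$ on each fiber $\Prm^1$ of $p$. Concretely, the fiber over a point $(a,b)$ is the closure of the line $\{(\lambda a,\mu b)\}$ in $\Prm^{m+n+1}_k$, which is a line, and $\Osh_X(1)$ has degree $1$ on it. To make this rigorous for arbitrary cycles, I would argue as follows. Restrict to each integral $V_i$. Then $\xi\cap[p^{-1}V_i]$ is the class of a hyperplane section $H\cap V_i''$ with $H$ a general hyperplane in $X$. The map $H\cap V_i''\to V_i$ is birational: it is generically a section of the $\Prm^1$-bundle, since a line meets a general hyperplane in exactly one point. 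This gives pushforward $[V_i]$.

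I expect the main obstacle to be this degree-one statement. A cleaner route is to compute in $A^*(\widetilde X)$: $\widetilde X$ is a $\Prm^1$-bundle $\Prm(\Osh(-1,0)\oplus\Osh(0,-1))$ over $\Prm^m_k\times\Prm^n_k$, with $\Osh(1)=\pi^*\Osh_X(1)$. Identifying it this way, $p_*(\xi\cap p^*\alpha)=\alpha$ is standard, by Fulton Proposition 3.1(a).

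Combining the steps, $\deg(\xi^{\ell+1}\cap[V'']_{\ell+1})=\deg\bigl(h_\ell(T_1,T_2)\cap[V]_\ell\bigr)$. Writing $[V]_\ell=\sum_{\alpha+\beta=d}c_{\alpha\beta}T_1^\alpha T_2^\beta$, the only monomials of $h_\ell$ that contribute are $T_1^{m-\alpha}T_2^{n-\beta}$, each with coefficient $1$, and each contributes $\int T_1^mT_2^n=1$. Hence the degree equals $\sum_{\alpha+\beta=d} c_{\alpha\beta}$, which is the sum of the codimension-$d$ bidegrees of $V$.
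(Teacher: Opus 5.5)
Your proof is correct and follows the paper's argument: the projection formula moves the computation from $[V']$ to $[V'']=p^*[V]$, the relation $\xi^{\ell+1}=h_\ell(t_1,t_2)\xi-t_1t_2h_{\ell-1}(t_1,t_2)$ splits it into two terms, the second term vanishes for dimension reasons, and the first gives the sum of the bidegrees via $\int h_\ell(T_1,T_2)\cap[V]_\ell$. Two parts of your write-up make explicit what the paper only asserts with ``all fibers of $\tau$ are isomorphic to $\Prm^1$'': identifying $\widetilde X\cong P(\Osh(-1,0)\oplus\Osh(0,-1))$ and using Fulton's Proposition 3.1(a) to get $p_*(\xi\cap p^*\alpha)=\alpha$, and the explicit monomial count.
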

\begin{proof}
For simplicity, we denote by $\tau$ the composition $pr\circ \iota$.
\begin{align*}
&\text{The codimension-}d\text{ degree of } V' \\
&= \int_X c_1(\mathcal{O}_X(1))^{m+n+1-d} \cap [V']_{m+n+1-d} \\
&= \int_X \pi_* \left( \xi^{m+n+1-d} \cap [V'']_{m+n+1-d} \right) \quad \text{(Projection formula)} \\
&= \int_{\tilde{X}} h_{m+n-d}(t_1, t_2) \xi \cap \left( \tau^* [V]_{m+n-d} \right) - t_1 t_2 h_{m+n-d-1}(t_1, t_2) \cap \left( \tau^* [V]_{m+n-d} \right) \\
&= \int_{\tilde{X}} \xi \cap \left[ \tau^* \left( h_{m+n-d}(c_1(\mathcal{O}_{\mathbb{P}^m_k}(1)|), c_1(\mathcal{O}_{\mathbb{P}^n_k}(1)|)) \cap [V]_{m+n-d} \right) \right] \\
&\quad - \tau^* \left( c_1(\mathcal{O}_{\mathbb{P}^m_k}(1)|) \cdot c_1(\mathcal{O}_{\mathbb{P}^n_k}(1)|) \cdot h_{m+n-d-1}(c_1(\mathcal{O}_{\mathbb{P}^m_k}(1)|), c_1(\mathcal{O}_{\mathbb{P}^n_k}(1)|)) \cap [V]_{m+n-d} \right),
\end{align*}
where $\mathcal{O}_{\mathbb{P}^m_k}(1)|$ denotes the pullback of $\mathcal{O}_{\mathbb{P}^m_k}(1)$ along the projection
$\mathbb{P}^m_k\times \mathbb{P}^n_k \to \mathbb{P}^m_k$, and analogously for $\mathcal{O}_{\mathbb{P}^n_k}(1)|$. Since all fibers of $\tau$ are isomorphic to $\Prm^1$, the first term in the last equation is the sum of the codimension-$d$ bidegrees of $V$, while the second term vanishes for dimensional reasons.
\end{proof}

\begin{remark}
One can generalize the above proposition to arbitrary multiprojective spaces: the codimension-$d$ degree of $V'$ is equal to the sum of the codimension-$d$ multidegrees of $V$.
\end{remark}

\bibliographystyle{amsalpha}

\bibliography{mybib}

\end{document}